\documentclass[twoside,12pt]{article}
\usepackage{amsmath, amsthm, amscd, amsfonts, amssymb, graphicx, color}
\usepackage{graphicx}
 \textwidth=16cm
 \textheight=21cm
 \oddsidemargin=0.5cm
 \evensidemargin=0.5cm
\pagestyle{myheadings}
\begin{document}
\setcounter{page}{1}
\setlength{\unitlength}{12mm}
\newcommand{\f}{\frac}
\newtheorem{theorem}{Theorem}[section]
\newtheorem{lemma}[theorem]{Lemma}
\newtheorem{proposition}[theorem]{Proposition}
\newtheorem{corollary}[theorem]{Corollary}
\theoremstyle{definition}
\newtheorem{definition}[theorem]{Definition}
\newtheorem{example}[theorem]{Example}
\newtheorem{solution}[theorem]{Solution}
\newtheorem{notation}[theorem]{Notation}
\theoremstyle{remark}
\newtheorem{remark}[theorem]{Remark}
\numberwithin{equation}{section}
\newcommand{\sta}{\stackrel}
%_________________________Pleae insert full name and address of authors______________________________________
\title{\bf P\'{o}lya-Knopp and Hardy-Knopp type inequalities for Sugeno integral}
\author { B. Daraby$^{a}$, F. Rostampour$^{b}$, A. R. Khodadadi$^{c}$,  A. Rahimi$^{d}$, R. Mesiar$^{e}$}
\date{\footnotesize $^{a, b, c, d}$Department of Mathematics, University of Maragheh, P. O. Box 55136-553, Maragheh, Iran\\
$^{e}$Department of Mathematics and Descriptive Geometry, Slovak University of Technology, SK-81005 Bratislava, Slovakia,
Czech Academy of Sciences, Institute of Information Theory and Automation.
}
\maketitle
%_____________________________________________________________________
%_________________________Please do not change this part any way____________________________________________

%_____________________________________________________________________
%_________________________Please type the abstract here_____________________________________________________
\begin{abstract}
\noindent
In this paper, we investigate the P\'{o}lya-Knopp type inequality for Sugeno integrals in two cases. In the first case, we suppose that the inner integral is the standard Riemann integral and the remaining two integrals are of Sugeno type. In the second case, all involved integrals are Sugeno integral. We present several examples illustrating the validity of our theorems. Finally, we  prove a Hardy-Knopp type inequality for Sugeno integral.
\newline
\\
%\begin{flushleft}
Subject Classification 2010: 03E72, 26E50, 28E10
%\end{flushleft}
%\vspace{.5cm}\leftline{{Subject Classification 2010: 03E72, 26E50, 28E10
%}}
\\
\vspace{.5cm}\leftline{Keywords: Sugeno integral; P\'{o}lya-Knopp's inequality; Hardy-Knopp type inequality.}
\end{abstract}
%_____________________________________________________________________
\section{Introduction}
In 1974, M. Sugeno introduced fuzzy measures and Sugeno integral for the first time which was a important analytical method of measuring uncertain information \cite{20}.
Sugeno integral is applied in many fields such as management decision-making, medical decision-making, control engineering and so on.
Many authors such as Ralescu and Adams considered equivalent definitions of Sugeno integral  \cite{12}. Rom\'{a}n-Flores et al.  examined level-continuity of Sugeno integral and H-continuity of fuzzy measures \cite{13, 15}. For more details of Sugeno integral, we refer readers to \cite{aa, asa, bb, pap1, pf, Pap1}.

%Fuzzy measures and Sugeno integral which were originally introduced by Sugeno in 1974 \cite{20} are important analytical methods of measuring uncertain information \cite{p2}. Many scholars have replaced the Sugeno integral in large or small operations with new operators, and they have proposed various types of Sugeno integral, such as the Shilkret integral, the bipolar level Choquet integral, the level-dependent Sugeno integral and etc.
%The Sugeno integral has been applied to many fields, such as management decision-making, medical decision-making, control engineering and etc. 
%Many authors have been studied some properties of the Sugeno integral. In particular, Ralescu and Adams \cite{12} investigated equivalent definitions of Sugeno integral and Rom\'{a}n-Flores et. al. \cite{13, 15} examined level-continuity of Sugeno integral and H-continuity of fuzzy measures. Wang and Klir \cite{21} presented a general overview on fuzzy measurement and fuzzy integration theory.

%The inequalities play an important role in mathematical analysis \cite{11, 18} as well as in other fields, e.g., those are used to estimate Chebyshev's functional, i.e., the difference between the integral of the product and the product of integrals, and to evaluate bounds for expectations of order and record statistics.

The study of fuzzy integral is first attributed to Rom\'{a}n-Flores et al. 
Many inequalities such as Markov's, Chebyshev's, Jensen's, Minkowski's, H\"{o}lder's and Hardy's inequalities 
have been studied by Flores-Franuli\v{c} and Rom\'{a}n-Flores for Sugeno integral (see \cite{5, 6} and their references). Recently, in \cite{dar2}, B. Daraby et al. studied some inequalties for Sugeno integral. 

%The study of inequalities for the Sugeno integral, which was initiated byRom\'{a}n-Flores et. al.  is the most popular.
%Recently, the Sugeno integral counterparts of several classical inequalities, including Markov's, Chebyshev's, Jensen's, Minkowski's, H\"{o}lder's and Hardy's inequalities, are given by Flores-Franuli\v{c} and Rom\'{a}n-Flores \cite{5, 6}.

In \cite{p-7}, Hardy announced and proved in \cite{p-8} a highly important classical Riemann integral inequality
\begin{eqnarray}
\label{p-1}
\int_0^{\infty}\left(\dfrac{1}{x}\int_0^x f(t)dt\right)^p dx\le \left(\dfrac{p}{p-1}\right)\int_0^\infty f^p(x)dx,
\end{eqnarray}
the so-called Hardy's inequality, where $p>1$ and $f \in L^p(0, \infty)$ is a non-negative function. The following related exponential Riemann integral inequality
\begin{eqnarray}
\label{p-2}
\int_0^\infty \exp\left(\dfrac{1}{x}\int_0^x \ln f(t) dt\right)dx<e\int_0^\infty f(x)dx,
\end{eqnarray}
holds for positive functions $f \in L^p(0, \infty)$. 
This inequality is known as Knopp's inequality. However, inequality \eqref{p-2} was certainly known before the mentioned Knopp's paper and Hardy himself claimed that it was G. P\'{o}lya who pointed it out to him earlier (probably by using the limit argument below). Therefore, we call the inequality \eqref{p-2} inequality P\'{o}lya-Knopp. It is important to note that inequalities \eqref{p-1} and \eqref{p-2} are closely related since \eqref{p-2} can
be obtained from \eqref{p-1} by rewriting it with the function $f$ replaced by $f^{1/p}$ and letting
$p\to\infty$. Therefore, P\'{o}lya-Knopp's inequality may be considered as a limiting relation of Hardy's inequality.

%This well-known inequality is many times referred to as Knopp's inequality. 
%However, inequality \eqref{p-2} was certainly known before the mentioned Knopp's paper and Hardy himself claimed that it was G. P\'{o}lya who pointed it out to him earlier (probably by using the limit argument below). Therefore, we prefer to call \eqref{p-2} by name P\'{o}lya-Knopp's inequality.

In \cite{p-10}, Kaijser et al. pointed out that both \eqref{p-1} and \eqref{p-2} are just special cases of the much more general Hardy-Knopp type inequality for positive functions $f$,
\begin{eqnarray}
\label{p-3}
\int_0^\infty \phi\left(\dfrac{1}{x}\int_0^x f(t)dt\right)\dfrac{dx}{x}\le \int_0^\infty \phi(f(x))\dfrac{dx}{x},
\end{eqnarray}
where $\phi$ is a convex function on $(0, \infty)$. This shows that both Hardy's and P\'{o}lya-Knopp's inequalities can be derived by using only convexity and gives an elegant new proof of these inequalities.

In this paper, we intend to prove P\'{o}lya-Knopp's and Hardy-Knopp's inequalities for the Sugeno integral.
This paper is organized as follows: in Section 2 some the preliminaries are  presented. In Section 3  we propose the P\'{o}lya-Knopp's inequality for Sugeno integral in two cases and we investigate Hardy-Knopp's inequality for Sugeno integral. Finally, in the last section, we presented a short conclusion.

\section{Preliminaries}
%In this section, we recall some basic definitions and previous preliminaries which will be used in the sequel. %For more details see \cite{21}.
In this section, we will provide some definitions and concepts for the next sections.

%We denote by $\mathbb{R}$ the set of all real numbers. 
Throughout this paper, we let $X$ be a non-empty set and $\Sigma$ be a $\sigma-$algebra of subsets of $X$. %Throughout this paper, all considered subsets are supposed to be in $\Sigma$.
\begin{definition}(Ralescu and Adams \cite{12}).
A set function $\mu:\Sigma\to[0, +\infty]$ is called a fuzzy measure if the following properties are satisfied:
\begin{enumerate}
\item
$\mu(\emptyset)=0 $;
\item
 $A \subseteq B\Rightarrow\mu(A)\leq \mu(B)$ (monotonicity);
% (monotonicity).
\item
 $A_1 \subseteq A_2 \subseteq\ldots\Rightarrow\lim \mu(A_i)=\mu\left(\bigcup \limits_{i=1}^\infty A_i\right)$ (continuity from below);
\item
  $A_1 \supseteq A_2 \supseteq \ldots$ and $\mu(A_1)<\infty\Rightarrow\lim \mu(A_i)=\mu\left(\bigcap\limits_{i=1}^\infty A_i\right)$  (continuity from above).
\end{enumerate}
When $\mu$ is a fuzzy measure, the triple $(X, \Sigma, \mu)$ is called a fuzzy measure space.
\end{definition}

If $f$ is a non-negative real-valued function on $X$, we will denote
$F_\alpha =\left\{x\in X \mid f(x)\geq\alpha\right\}=\left\{f\geq\alpha\right\}$, the
$\alpha$-level of $f$, for $\alpha>0$. The set
$F_0 = \overline{\left\{x\in X \mid f(x)>0\right\}}={\rm supp}(f)$  is the  support of $f$.

 %We know that:
% $$\alpha \leq \beta \Rightarrow\left\{f \geq
%\beta\right\} \subseteq \left\{f\geq\alpha \right\}.$$
If $\mu$ is a  fuzzy measure on $X$, we define the following:
\begin{equation*}
 \mathfrak {F}^{\sigma}(X)=\left\{f : X\rightarrow[0,\infty)|\quad f~ {\rm is}\ \mu-{\rm measurable}\right\}.
\end{equation*}
%%%%%%%%%%%%%%%%%%%%
\begin{definition}\label{df}
(Pap \cite{pap1}, Wang and Klir \cite{21}).
 Let $\mu$ be a fuzzy measure on
$(X, \Sigma)$. If $f \in \mathfrak{F}^\sigma(X)$ and $A\in \Sigma$, then
the Sugeno integral of $f$ on $A$ is defined by
$$ - \hspace{-1.1em} \int_A fd\mu =\bigvee_{\alpha \geq 0} \left(\alpha \wedge\mu(A\cap F_\alpha)\right),$$
where $\vee$ and $ \wedge$ denotes the operations $sup$ and $inf$ on
 $[0,\infty]$, respectively and $\mu$ is the Lebesgue measure. If $A=X$, the fuzzy integral may also be denoted by $- \hspace{-1em} \int fd\mu$.
%$$ - \hspace{-1.1em} \int_{X} fd\mu =- \hspace{-1.1em} \int fd\mu=\bigvee_{\alpha \geq 0} \left(\alpha\wedge\mu(F_\alpha)\right).$$
\end{definition}

%It is well-known that the Sugeno integral is a type of nonlinear integrals, i.e., for general cases
%$$- \hspace{-1.1em} \int_A \left(af+bg\right)d\mu=a- \hspace{-1.1em} \int_A fd\mu+b- \hspace{-1.1em} \int_A gd\mu,$$
%does not hold (see \cite{pap1}).

%The following properties of the Sugeno integral can be found in \cite{pap1, 21}.

The following proposition gives the most elementary properties of the Sugeno integral.

\begin{proposition}\label{p23}
 (Pap \cite{pap1}, Wang and Klir \cite{21}).
  Let $(X, \Sigma, \mu)$ be a fuzzy measure space, $A, B \in \sum$ and $f, g \in  \mathfrak {F}^{\sigma}(X)$. We have
\begin{enumerate}
\item
 $- \hspace{-.9em} \int_A f d \mu \leq \mu(A)$;
\item
 $- \hspace{-.9em} \int_A k d \mu = k \wedge \mu(A),$ for any constant $k\in[0, \infty)$;
%\item[(3)]
% If $f \leq g$ on $A$, then $- \hspace{-.9em} \int_A f d \mu \leq - \hspace{-.9em} \int_A g d \mu$.
%\item[(4)]
%$- \hspace{-.9em} \int_A (f+a)d\mu\le - \hspace{-.9em} \int_A fd\mu+- \hspace{-.9em} \int_A ad\mu$.
%\item[(5)]
% If $A \subset B,$ then $- \hspace{-.9em} \int_A fd \mu \leq - \hspace{-.9em} \int_A fd \mu$.
%\item[(6)]
% If $\mu(A)< \infty,$ then $- \hspace{-.9em} \int_A fd \mu \geq \alpha \Leftrightarrow \mu(A \cap \{f \geq \alpha\}) \geq \alpha$.
%\item[(7)]
% $\mu(A \cap \{f \geq \alpha\}) \leq \alpha \Rightarrow - \hspace{-.8em} \int_A fd \mu \leq \alpha$.
% \item[(8)]
% $\mu(A \cap \{f \geq \alpha\}) \geq \alpha \Rightarrow - \hspace{-.8em} \int_A fd \mu \geq \alpha$.
\item
 $- \hspace{-.9em} \int_A fd \mu < \alpha \Leftrightarrow $ there exists $\gamma< \alpha$ such that $(A \cap \{f \geq \gamma\}) < \alpha$;
\item
 $- \hspace{-.9em} \int_A fd \mu > \alpha \Leftrightarrow $ there exists $\gamma > \alpha$ such that $(A \cap \{f \geq \gamma \})> \alpha$.
\end{enumerate}
\end{proposition}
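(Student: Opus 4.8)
The plan is to read off all four items directly from the defining formula of the Sugeno integral in Definition~\ref{df}, combined with the monotonicity axiom of the fuzzy measure $\mu$. Throughout, write $I=\bigvee_{\beta\ge 0}\bigl(\beta\wedge\mu(A\cap F_\beta)\bigr)$ for the Sugeno integral of $f$ over $A$, where $F_\beta=\{f\ge\beta\}$.

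For item~(1), I would observe that $A\cap F_\beta\subseteq A$ for every $\beta\ge 0$, so monotonicity of $\mu$ gives $\beta\wedge\mu(A\cap F_\beta)\le\mu(A\cap F_\beta)\le\mu(A)$; taking the supremum over $\beta$ yields $I\le\mu(A)$. For item~(2), substituting the constant $f\equiv k$ makes $F_\beta=X$ when $\beta\le k$ and $F_\beta=\emptyset$ when $\beta>k$, so the term $\beta\wedge\mu(A\cap F_\beta)$ equals $\beta\wedge\mu(A)$ on $[0,k]$ and vanishes on $(k,\infty)$; the supremum of $\beta\wedge\mu(A)$ over $\beta\in[0,k]$ is then exactly $k\wedge\mu(A)$, which I would confirm by separating the cases $\mu(A)\le k$ and $\mu(A)>k$.

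Items~(3) and~(4) I would obtain from the basic characterisation of a supremum of real numbers. For~(4), $I>\alpha$ means the supremum exceeds $\alpha$, hence some level $\gamma$ already satisfies $\gamma\wedge\mu(A\cap F_\gamma)>\alpha$, which forces simultaneously $\gamma>\alpha$ and $\mu(A\cap F_\gamma)>\alpha$; conversely, any such $\gamma$ contributes a term larger than $\alpha$, so $I>\alpha$. For the forward direction of~(3), I would choose $\gamma$ with $I<\gamma<\alpha$; then $\gamma\wedge\mu(A\cap F_\gamma)\le I<\gamma$ forces $\mu(A\cap F_\gamma)\le I<\alpha$, giving the required $\gamma$.

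The step I expect to be the main obstacle is the reverse implication of~(3): from a single level $\gamma<\alpha$ with $\mu(A\cap F_\gamma)<\alpha$ one must conclude that the \emph{entire} supremum $I$ stays below $\alpha$, and merely verifying that each individual term lies below $\alpha$ is not sufficient. The remedy is to set $m=\max\{\gamma,\mu(A\cap F_\gamma)\}$, which is a constant strictly less than $\alpha$, and to split the levels into two ranges: for $\beta\le\gamma$ one has $\beta\wedge\mu(A\cap F_\beta)\le\beta\le\gamma\le m$, while for $\beta>\gamma$ the inclusion $F_\beta\subseteq F_\gamma$ together with monotonicity gives $\mu(A\cap F_\beta)\le\mu(A\cap F_\gamma)\le m$. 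Every term is thereby dominated by the single bound $m<\alpha$, so $I\le m<\alpha$, which completes the equivalence.
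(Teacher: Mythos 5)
Your proof is correct, and in fact it is more than the paper provides: the paper states this proposition without proof, simply citing Pap and Wang--Klir, so there is no internal argument to compare against. Your verification from Definition~\ref{df} is the standard one and is sound in all four items. In particular, you correctly identified and handled the only delicate point, the reverse implication of item~(3): bounding each term $\beta\wedge\mu(A\cap F_\beta)$ below $\alpha$ individually would not suffice (a supremum of terms each below $\alpha$ can still equal $\alpha$), but your uniform bound $m=\max\{\gamma,\mu(A\cap F_\gamma)\}<\alpha$, obtained by splitting into $\beta\le\gamma$ (where the term is at most $\beta\le\gamma$) and $\beta>\gamma$ (where $F_\beta\subseteq F_\gamma$ and monotonicity of $\mu$ give $\mu(A\cap F_\beta)\le\mu(A\cap F_\gamma)$), closes the gap. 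One further remark: the statement as printed in the paper omits the measure $\mu$ in items~(3) and~(4), writing ``$(A\cap\{f\ge\gamma\})<\alpha$''; you implicitly corrected this to $\mu(A\cap\{f\ge\gamma\})<\alpha$, which is the form actually used later in the paper (e.g.\ in the proof of Theorem~\ref{t0-1}) and the form under which your argument, and the proposition itself, is true.
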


%%%%%%%%%%%%%%%%%%%%%%
\begin{remark}
Consider the distribution function $F$ associated to $f$ on $A$, that is to say,
 $$F(\alpha)=\mu(A \cap \{f\ge\alpha\}).$$
 Then %from Proposition \ref{p23}, (7) and (8) we have
$$F(\alpha)=\alpha \Rightarrow - \hspace{-1.1em} \int_A f d \mu=\alpha.$$
Thus, from a numerical (or computational) point of view, the Sugeno integral can be calculated by solving the equation $F(\alpha)=\alpha$ (if the solution exists).
\end{remark}

\begin{notation}
We will use ${\rm SINT}~f(x)dx$ for the Sugeno integral on $[0, \infty)$ with respect to standard Lebesgue measure.
\end{notation}
%%%%%%%%%%%%%%%%%%%%
%%%%%%%%%%%%%%%%%%%%%%%
\section{Main results}
%In \cite{17}, Hardy's type inequality proved for fuzzy integrals.
 In this section, we prove P\'{o}lya-Knopp type inequality in two cases and Hardy-Knopp type inequality for Sugeno integral.

%%%%%%%%%%%%%%%%%%%%%%%%%%%%%%%
\begin{theorem}\label{t0-1}
(P\'{o}lya-Knopp type inequality for Sugeno integral: first case).
Let $f:[0, \infty)\to[0, \infty)$ be an increasing measurable function and
${\rm SINT}~ f(x)dx<\infty$. Then the inequality
\begin{eqnarray}
\label{t1-1}
 {\rm SINT}~\exp\left(\dfrac{1}{x}\int_0^x \ln f(t)dt\right)dx \le  {\rm SINT}~ f(x)dx,
\end{eqnarray}
holds.
\end{theorem}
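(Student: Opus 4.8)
The plan is to reduce the claimed integral inequality to a \emph{pointwise} inequality between the two integrands and then invoke the monotonicity of the Sugeno integral. Write
\[
g(x) = \exp\left(\frac{1}{x}\int_0^x \ln f(t)\,dt\right),
\]
which is the geometric mean of $f$ over the interval $[0,x]$. The first step is to observe that, because $f$ is increasing, this geometric mean is controlled by the right-endpoint value $f(x)$; intuitively, the increasing hypothesis is exactly what will remove the multiplicative constant ($e$) appearing in the classical Riemann version \eqref{p-2}.

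Concretely, for every $t\in[0,x]$ we have $f(t)\le f(x)$, hence $\ln f(t)\le \ln f(x)$, with the convention $\ln 0=-\infty$ (this convention only strengthens the inequality and forces $g(x)=0$ on any initial segment where $f$ vanishes). Integrating over $[0,x]$ and dividing by $x$ gives $\frac{1}{x}\int_0^x \ln f(t)\,dt\le \ln f(x)$, and applying the increasing function $\exp$ yields the pointwise bound
\[
g(x)\le f(x),\qquad x\in[0,\infty).
\]

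It then remains to pass from this pointwise inequality to the inequality of Sugeno integrals, and for this I would use that the Sugeno integral is monotone in its integrand. Indeed, from $g\le f$ the level sets satisfy $\{g\ge\alpha\}\subseteq\{f\ge\alpha\}$, so by the monotonicity of $\mu$ we obtain $\mu\big([0,\infty)\cap\{g\ge\alpha\}\big)\le\mu\big([0,\infty)\cap\{f\ge\alpha\}\big)$ for every $\alpha\ge 0$. Since $\alpha\wedge(\cdot)$ is nondecreasing, it follows that $\alpha\wedge\mu\big([0,\infty)\cap\{g\ge\alpha\}\big)\le\alpha\wedge\mu\big([0,\infty)\cap\{f\ge\alpha\}\big)$; taking the supremum over $\alpha\ge 0$ in Definition~\ref{df} preserves the inequality, giving ${\rm SINT}\,g(x)\,dx\le{\rm SINT}\,f(x)\,dx$, which is exactly \eqref{t1-1}. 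The hypothesis ${\rm SINT}\,f(x)\,dx<\infty$ guarantees that both sides are finite.

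I do not expect a serious obstacle: the monotonicity of $f$ makes the geometric-mean estimate elementary, so the only points demanding care are the measurability of $g$ (needed so that the left-hand Sugeno integral is defined, which follows since $x\mapsto\int_0^x\ln f(t)\,dt$ is continuous where finite and $\exp$ is continuous) and the bookkeeping at points where $f=0$ and $\ln f=-\infty$, both of which are handled by the convention above.
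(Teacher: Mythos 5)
Your proposal is correct, and it takes a genuinely more direct route than the paper. The paper proves the theorem by contradiction: setting $\alpha={\rm SINT}\,f(x)dx$ and assuming the left-hand side of \eqref{t1-1} exceeds $\alpha$, it invokes Proposition \ref{p23}(4) to produce $\gamma>\alpha$ with $\mu\left\{\exp\left(\frac{1}{x}\int_0^x\ln f(t)dt\right)>\gamma\right\}>\alpha$, establishes the level-set inclusion $\left\{\exp\left(\frac{1}{x}\int_0^x\ln f(t)dt\right)>\gamma\right\}\subseteq\{f>\gamma\}$ using the increasing hypothesis, and then applies Proposition \ref{p23}(4) again to get ${\rm SINT}\,f(x)dx>\alpha$, a contradiction. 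You instead prove the stronger \emph{pointwise} bound $\exp\left(\frac{1}{x}\int_0^x\ln f(t)dt\right)\le f(x)$ — immediate from $f(t)\le f(x)$ for $t\le x$ — and conclude by the monotonicity of the Sugeno integral, which you correctly verify straight from Definition \ref{df}. Both arguments pivot on the same essential fact (monotonicity of $f$ lets the right endpoint dominate the geometric mean, which is what removes the constant $e$ of the classical inequality \eqref{p-2}), and indeed your pointwise bound implies the paper's level-set inclusion for every $\gamma$ at once. What your packaging buys: no proof by contradiction, no appeal to Proposition \ref{p23}(3)--(4), and it cleanly sidesteps the paper's imprecise step in which $\int_0^x\ln f(t)dt>\int_0^x\ln\gamma\,dt$ is said to yield $\ln f(t)>\ln\gamma$ "from properties of classical integral" — read literally this pointwise conclusion for all $t$ is false; it only holds for \emph{some} $t\le x$, after which $f$ increasing gives $f(x)\ge f(t)>\gamma$. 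Your treatment of the zeros of $f$ via the convention $\ln 0=-\infty$ (which only makes the left integrand smaller) is also a point the paper passes over in silence.
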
\begin{proof}
%We prove this theorem by reduction and absurdum.
%
Let
$\alpha={\rm SINT}~f(x)dx$.
%Thereby, $\alpha=\dfrac{\alpha'}{e}=- \hspace{-0.8em}\int_0^\infty f(x)dx$.
If %we suppose that
\begin{eqnarray}
\label{t1-2}
{\rm SINT}~\exp\left(\dfrac{1}{x}\int_0^x \ln f(t)dt\right)dx>\alpha,
\end{eqnarray}
from proposition \ref{p23} (4), there exists $\gamma>\alpha$, such that
\begin{eqnarray}
\label{t1-3}
\mu\left\{\exp\left(\dfrac{1}{x}\int_0^x \ln f(t)dt\right)>\gamma\right\}>\alpha.
\end{eqnarray}
Now, if
 $$x\in \left\{\exp\left(\dfrac{1}{x}\int_0^x \ln f(t)dt\right)>\gamma\right\},$$
then we have
\begin{eqnarray}
%\label{t1-4}
\dfrac{1}{x}\int_0^x \ln f(t)dt>\ln \gamma.
\end{eqnarray}
By multiplying $x$ on both sides of the above equation, we get
$$\int_0^x \ln f(t)dt>x\ln\gamma=\int_0^x \ln \gamma dt.$$
From properties of classical integral, we can write
\begin{eqnarray}
\label{t1-4}
\ln f(t)>\ln\gamma.
\end{eqnarray}
Using \eqref{t1-4} and assumptions of the theorem ($f$ is increasing function), we have
$$f(x)>\gamma.$$
%Now, from properties of fuzzy integrals, we have
Therefore
$$\left\{f(x)>\gamma\right\}\supseteq \left\{\exp\left(\dfrac{1}{x}\int_0^x \ln f(t)dt\right)>\gamma\right\}.$$
From monotonicity of $\mu$, we can write
\begin{eqnarray}
\label{*}
\mu\left\{f(x)>\gamma\right\}\ge \mu\left\{\exp\left(\dfrac{1}{x}\int_0^x \ln f(t)dt\right)>\gamma\right\}.
\end{eqnarray}
Thereby, from relations \eqref{t1-3} and \eqref{*}, we obtain that
$$\mu\left\{f(x)>\gamma\right\}\ge\alpha.$$
Using the above relation and proposition \ref{p23} (4), we get
$${\rm SINT}~ f(x)dx>\alpha.$$
This is a contradiction with our initial hypothesis.
\end{proof}
%%%%%%%%%%%%%%%%%%%
In the following by an example, we illustrate the validity of Theorem \ref{t0-1}.
%%%%%%%%%%
\begin{example}
Let $f(x)=\dfrac{x}{2}$ and $X=[0, 5]$. A straightforward calculus shows that
$$\int_0^x \ln f(t)dt=\int_0^x \ln \dfrac{t}{2} dt=t\left(\ln\left(\frac{t}{2}\right)-1\right)\bigg|_0^x=x\left(\ln\left(\frac{x}{2}\right)-1\right),$$
%By replacing in relation \eqref{t1-1}, we obtain
by multiplication $\dfrac{1}{x}$ for both sides of above relation, we get
\begin{eqnarray*}
&& \dfrac{1}{x}\int_0^x \ln f(t)dt=\ln\left(\frac{x}{2}\right)-1,\\&\Rightarrow &
\exp\left(\dfrac{1}{x}\int_0^x \ln f(t)dt\right)=\exp\left(\ln\left(\frac{x}{2}\right)-1\right)=\dfrac{x}{2}.\dfrac{1}{e}=\dfrac{x}{2e}.
\end{eqnarray*}
Now, by fuzzy integration of both sides of above equation from $0$ to $5$, we have
\begin{eqnarray*}
- \hspace{-1.1em} \int_0^5\exp\left(\dfrac{1}{x}\int_0^x \ln f(t)dt\right)dx=- \hspace{-1.1em} \int_0^5 \dfrac{x}{2e}dx.
\end{eqnarray*}
Now, we calculus $- \hspace{-.9em} \int_0^5 \dfrac{x}{2e}dx$. From Definition \ref{df}, we have
\begin{eqnarray}\label{e1-1}
- \hspace{-1.1em} \int_0^5 \dfrac{x}{2e}dx &=&
\sup_{\alpha\in[0, 5]} \left(\alpha\wedge \mu\left([0, 5]\cap F_\alpha\right)\right)\nonumber\\&=&
 \sup_{\alpha\in[0, 5]} \left(\alpha\wedge\mu\left([0, 5]\cap\left\{x:~\dfrac{x}{2e}\ge\alpha\right\}\right)\right) \nonumber\\&=&
\sup_{\alpha\in[0, 5]}  \left(\alpha\wedge\mu\left([0, 5]\cap[2e\alpha, 5]\right)\right) \nonumber\\&=&
\sup_{\alpha\in[0, 5]} \left(\alpha\wedge\mu\left([2e\alpha, 5]\right)\right) \nonumber\\&=&
\sup_{\alpha\in[0, 5]} \left(\alpha\wedge\left(5-2e\alpha\right)\right)\nonumber\\&=&
\dfrac{5}{2e+1}=0.781.
\end{eqnarray}
Finally, %we get
%\begin{eqnarray}\label{e1-1}
%&&5-2\alpha e=\alpha\Rightarrow \alpha=\dfrac{5}{2e+1},\nonumber\\&&
%- \hspace{-1.1em} \int_0^5\exp\left(\dfrac{1}{x}\int_0^x \ln f(t)dt\right)dx=\dfrac{5}{2e+1}.
%\end{eqnarray}
for right hand of \eqref{t1-1}, by using Definition \ref{df}, we have
\begin{eqnarray}
\label{e1-2}
- \hspace{-1.1em} \int_0^5 f(x)dx=- \hspace{-1.1em} \int_0^5 \dfrac{x}{2}dx=\dfrac{5}{3}=1.6,
\end{eqnarray}
%Note that, we use Proposition \ref{p23} (2) in relations \eqref{e1-1} and \eqref{e1-2}.
now, from relations \eqref{e1-1} and \eqref{e1-2}, we get
$$- \hspace{-1.1em} \int_0^5  \exp\left(\dfrac{1}{x}\int_0^x \ln f(t)dt\right)dx =0.781\le  1.6=- \hspace{-1.1em} \int_0^5  f(x)dx.$$
\end{example}
%%%%%%%%%%%%%%%%%%%%%%%%%%%%%%%
\begin{theorem}\label{t0-2}
(P\'{o}lya-Knopp type inequality for Sugeno integral: second case).
Let $f:[0, \infty)\to [0, \infty)$ be a measurable function and ${\rm SINT}~ f(x)dx<\infty$. Then the inequality
\begin{eqnarray}
\label{t2-0}
{\rm SINT}~ \exp\left(\dfrac{1}{x}- \hspace{-1.1em}\int_0^x \ln f(t)dt\right)dx \le e~{\rm SINT}~ f(x)dx,
\end{eqnarray}
holds.
\end{theorem}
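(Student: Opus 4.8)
The plan is to mirror the contradiction argument of Theorem~\ref{t0-1}, but to replace the pointwise monotonicity step (unavailable here, since $f$ is no longer assumed increasing) by a distribution-function argument, and to account for the multiplicative constant $e$ by means of the elementary bound in Proposition~\ref{p23}(1).

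First I set $\alpha = {\rm SINT}~f(x)dx$ and argue by contradiction, assuming that the left-hand side of \eqref{t2-0} exceeds $e\alpha$. Writing $G(x) = \exp\bigl(\frac1x -\hspace{-1.1em}\int_0^x \ln f(t)dt\bigr)$, Proposition~\ref{p23}(4) then produces a threshold $\gamma > e\alpha$ with $\mu\{x : G(x) > \gamma\} > e\alpha$. For every $x$ in this level set, taking logarithms and multiplying by $x$ converts the defining inequality into
\[
-\hspace{-1.1em}\int_0^x \ln f(t)dt > x\ln\gamma .
\]

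The new ingredient compared with the first case is twofold. On one hand, Proposition~\ref{p23}(1) gives the uniform bound $-\hspace{-1.1em}\int_0^x \ln f(t)dt \le \mu([0,x]) = x$, whence $G(x)\le e$ everywhere; this cap at $e$ is exactly the source of the constant $e$ on the right of \eqref{t2-0}. On the other hand, because $f$ need not be increasing I cannot pass directly from the averaged inner integral to a pointwise inequality for $f(x)$. Instead I would apply Proposition~\ref{p23}(4) to the \emph{inner} Sugeno integral: the displayed inequality yields a level $\lambda > x\ln\gamma$ for which $\mu\{t\in[0,x] : \ln f(t)\ge\lambda\} > x\ln\gamma$, so that $f(t) > \gamma^{x}$ on a subset of $[0,x]$ of positive measure. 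The final step is to translate this local information into a lower bound for a global level set of $f$ and to conclude, via the monotonicity of $\mu$ (used exactly as in \eqref{*}), that $\mu\{f \ge \gamma/e\} > \alpha$; since $\gamma/e > \alpha$, Proposition~\ref{p23}(4) then gives ${\rm SINT}~f(x)dx > \alpha$, contradicting the choice of $\alpha$.

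I expect the main obstacle to be precisely this reconciliation between the inner threshold and the factor $e$: the inner Sugeno integral delivers a bound of the form $f(t) > \gamma^{x}$ on $[0,x]$ whose exponent depends on $x$, whereas the desired contradiction requires the fixed threshold $\gamma/e$. Making these compatible is where the loss of monotonicity must be absorbed by the cap $-\hspace{-1.1em}\int_0^x \ln f(t)dt \le x$, and it is the delicate point that the first-case proof avoids entirely. A secondary technical point to address is that $\ln f$ may be negative when $f<1$, so the inner integrand must be interpreted consistently with the non-negativity built into Definition~\ref{df}.
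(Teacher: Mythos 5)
Your proposal is not a complete proof, and the hole is exactly the step you yourself flag as ``the main obstacle'': the passage from the local information ``$f(t)>\gamma^{x}$ on a subset of $[0,x]$ of measure greater than $x\ln\gamma$'' to the global bound $\mu\{f\ge\gamma/e\}>\alpha$ is never carried out --- and it cannot be. The point you defer as ``secondary'' ($\ln f$ may be negative) is in fact the fatal one. By Definition~\ref{df} the Sugeno integral only registers levels $\lambda\ge 0$, so the inner integral $- \hspace{-1.1em}\int_0^x \ln f(t)dt$ is always nonnegative, whence $G(x)=\exp\left(\frac{1}{x}- \hspace{-1.1em}\int_0^x \ln f(t)dt\right)\ge 1$ for every $x$ and ${\rm SINT}~G(x)dx\ge 1$. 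Now test your scheme on $f\equiv c$ with $0<c<1/e$: then $\alpha={\rm SINT}~f(x)dx=c$ by Proposition~\ref{p23}(2), every level set $\{\ln f\ge\lambda\}$ with $\lambda\ge0$ is empty, so the inner integral vanishes and $G\equiv 1$. Your contradiction hypothesis ${\rm SINT}~G(x)dx>e\alpha$ holds, and Proposition~\ref{p23}(4) produces some $\gamma\in(ec,1)$ with $\mu\{G>\gamma\}=\infty$; but then $x\ln\gamma<0$, so the inner application of Proposition~\ref{p23}(4) returns only ``a set of measure greater than a negative number,'' i.e.\ nothing, and indeed $\mu\{f\ge\gamma/e\}=0$, which is not $>\alpha$. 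So the transfer step you need is simply false in the regime $\alpha<1/e$ (where any admissible $\gamma$ is below $1$), and no repair is possible along these lines: for this $f$ the left side of \eqref{t2-0} equals $1$ while the right side equals $ec<1$, so under Definition~\ref{df} the asserted inequality itself breaks there, and a contradiction argument that is sound would have to prove it.

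For comparison, the paper does not argue by contradiction at all. It shares your first ingredient --- Proposition~\ref{p23}(1) gives $- \hspace{-1.1em}\int_0^x \ln f(t)dt\le x$, hence the integrand is capped by $e$ and the left side of \eqref{t2-0} is at most $e$ --- but then it splits into cases according to whether $e~{\rm SINT}~f(x)dx$ exceeds $e$ or not: the first case follows from the cap alone, and the second is handled by a direct chain (an appeal to a Jensen-type inequality for Sugeno integrals, followed by an explicit evaluation of ${\rm SINT}~\exp\left(\frac{\ln q}{x}\right)dx$ via the equation $q=\alpha^{\alpha}$), not by level-set reasoning. Note that the paper's Case 2 glosses over the very same $q<1$ (equivalently $\ln$-sign) difficulty that sinks your step; but as a review of your proposal the verdict is that its central step is missing and, in the stated generality, unfixable.
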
\begin{proof}
From Proposition \ref{p23} (1), we know that
$$- \hspace{-1.1em} \int_0^x \ln f(t)dt\le x.$$
By a straightforward calculus, we obtain
\begin{eqnarray*}
 \dfrac{1}{x}- \hspace{-1.1em} \int_0^x \ln f(t)dt \le 1\Rightarrow
\exp\left(\dfrac{1}{x}- \hspace{-1.1em} \int_0^x \ln f(t)dt\right)\le \exp(1)=e,
\end{eqnarray*}
by fuzzy integration of both sides of above equation from $0$ to $\infty$ and using Proposition \ref{p23} (2), we have
\begin{eqnarray}\label{t2-1}
{\rm SINT}~ \exp\left(\dfrac{1}{x}- \hspace{-1.1em} \int_0^x \ln f(t)dt\right)dx \le {\rm SINT}~ e~ dx=e.
\end{eqnarray}
%(In last relation, we use Proposition \ref{p23} (2)).
Now, the demonstration will be divided in two parts.

Case 1:
$e~{\rm SINT}~ f(x)dx> e.$
In this case, by using \eqref{t2-1}, is not difficult to see that
$${\rm SINT}~ \exp\left(\dfrac{1}{x}- \hspace{-1.1em} \int_0^x \ln f(t)dt\right)dx \le e< e~{\rm SINT}~ f(x)dx.$$

Case 2:
$e~{\rm SINT}~f(x)dx\le e.$
In this case, if we let
$q=e~{\rm SINT}~ f(x)dx$, we get
\begin{eqnarray*}
e\ge q &=&
e~{\rm SINT}~ f(x)dx\\ &\ge &
- \hspace{-1.1em} \int_0^x f(t)dt \\&=&
- \hspace{-1.1em} \int_0^x \exp\left(\ln f(t)\right)dt.
\end{eqnarray*}
By Jensen type inequality for Sugeno integral, we have
$$e\ge \exp- \hspace{-1.1em} \int_0^x\ln f(t)dt.$$
%In the last relation we use Theorem \ref{tj}.
Now, we can write
\begin{eqnarray*}
1= \ln(e) \ge \ln(q) \ge - \hspace{-1.1em} \int_0^x \ln f(t)dt.
 % &\Rightarrow &1\ge \ln(q) \ge - \hspace{-1.1em} \int_0^x \ln f(t)dt\\&\Rightarrow &\ln(q) \ge - \hspace{-1.1em} \int_0^x \ln f(t)dt.
\end{eqnarray*}
By multiplying $\dfrac{1}{x}$ on both sides of the above equation, we get
\begin{eqnarray*}
&&\dfrac{1}{x}\ln(q) \ge\dfrac{1}{x} - \hspace{-1.1em} \int_0^x \ln f(t)dt \\&\Rightarrow &
\exp\left(\dfrac{1}{x}\ln(q)\right) \ge\exp\left(\dfrac{1}{x} - \hspace{-1.1em} \int_0^x \ln f(t)dt\right),
\end{eqnarray*}
by fuzzy integration of both sides of above equation from $0$ to $\infty$, we obtain
$${\rm SINT}~\exp\left(\dfrac{1}{x}\ln(q)\right)dx \ge{\rm SINT}~\exp\left(\dfrac{1}{x} - \hspace{-1.1em} \int_0^x \ln f(t)dt\right)dx.$$
Now, we calculus
${\rm SINT}~\exp\left(\dfrac{1}{x}\ln(q)\right)dx$.
For this, we have
\begin{eqnarray*}
\exp\left(\dfrac{\ln q}{x}\right)\ge\alpha\Rightarrow \dfrac{\ln q}{x}\ge \ln\alpha\Rightarrow \ln q\ge x\ln\alpha\Rightarrow x\le \dfrac{\ln q}{\ln\alpha}\Rightarrow x\in[0, \dfrac{\ln q}{\ln\alpha}].
\end{eqnarray*}
Because the integration interval is $[0, \infty)$ so
$$\dfrac{\ln q}{\ln\alpha}=\alpha\Rightarrow \ln q=\alpha\ln\alpha=\ln \alpha^{\alpha} \Rightarrow q=\alpha^{\alpha}.$$
In this equation, calculating $\alpha$ in $q$ is a little difficult.
Note that, if  we can calculus $\alpha$ in $q$, we will have a function like $\alpha=g(q)$. According to the graph of this equation, it is easy to say $q>\alpha$.
 So it's easy to write:
$${\rm SINT}~ \exp\left(\dfrac{1}{x}- \hspace{-1.1em} \int_0^x \ln f(t)dt\right)dx \le \alpha=g(q)<q=e~{\rm SINT}~ f(x)dx,$$
and the proof is now complete.
\end{proof}
%%%%%%%%%%%%%%%
In the sequel, by an example, we show the validity of the Theorem \ref{t0-2}.
%%%%%%%%%%%%%%
\begin{example}
Let $f(x)=\exp\left(\dfrac{1}{x}\right)$. By a straightforward calculus and from Proposition \ref{p23} (2),
we have
\begin{eqnarray*}
&& - \hspace{-1.1em} \int_0^x \ln \exp\left(\dfrac{1}{t}\right)dt=1, %- \hspace{-1.1em} \int_0^x kdt=k\wedge x=
%x,
\\&\Rightarrow &
\dfrac{1}{x}- \hspace{-1.1em} \int_0^x \ln \exp\left(\dfrac{1}{t}\right)dt=\dfrac{1}{x},\\ &\Rightarrow &
\exp\left(\dfrac{1}{x}- \hspace{-1.1em} \int_0^x\dfrac{1}{t} dt\right) =\exp\left(\dfrac{1}{x}\right),
\end{eqnarray*}
by fuzzy integration the above equation from $0$ to $5$, we obtain
$$- \hspace{-1.1em} \int_0^5 \exp\left(\dfrac{1}{x}- \hspace{-1.1em} \int_0^x \dfrac{1}{t} dt\right)dx= - \hspace{-1.1em} \int_0^5 \exp\left(\dfrac{1}{x}\right)dx=e,$$
and%  from Proposition \ref{p23} (2),
$$- \hspace{-1.1em} \int_0^5 f(x)dx=- \hspace{-1.1em} \int_0^5 \exp\left(\dfrac{1}{x}\right)dx=e.$$
Therefore,
$$- \hspace{-1.1em} \int_0^5 \exp\left(\dfrac{1}{x}- \hspace{-1.1em} \int_0^x \ln \left(\dfrac{1}{t}\right) dt\right)dx=e \le e\times e=e- \hspace{-1.1em} \int_0^5 f(x)dx.$$
\end{example}
%%%%%%%%%%%%%%%%%%%%
\begin{remark}
Generally, the Theorems \ref{t0-1} and \ref{t0-2} can be written as follows:
$${\rm SINT}~ F\left(\dfrac{1}{x}\int_0^x F^{-1}(f(t))dx\right)dx\le e~{\rm SINT}~ f(x)dx,$$
$${\rm SINT}~ F\left(\dfrac{1}{x}- \hspace{-1.1em}\int_0^x F^{-1}(f(t))dx\right)dx\le e~{\rm SINT}~ f(x)dx,$$
where
$f, F:[0, \infty)\to [0, \infty)$
(increasing measurable function in first case and measurable function in second case)
and $F^{-1}$ denotes the inverse of $F$.
Note that, $F$ must have an inverse.
\end{remark}

\begin{remark}
If we replace $\dfrac{dx}{x}$ with $dx$ in
\eqref{t2-0}, we will have
\begin{eqnarray*}
%\label{t3-1}
{\rm SINT}~ \exp\left(\dfrac{1}{x}- \hspace{-1.1em} \int_0^x \ln f(t)dt\right)\dfrac{dx}{x} \le e~{\rm SINT}~ f(x)\dfrac{dx}{x},
\end{eqnarray*}
that it can be rewritten in the equivalent form
\begin{eqnarray}
\label{t3-0}
{\rm SINT}~ \phi\left(\dfrac{1}{x}- \hspace{-1.1em} \int_0^x \phi^{-1} f(t)dt\right)\dfrac{dx}{x} \le e~{\rm SINT}~ f(x)\dfrac{dx}{x},
\end{eqnarray}
where $\phi^{-1}$ denotes the inverse of $\phi$,
that it's true in general case.
\end{remark}
%%%%%%%%%%%
In relation \eqref{p-3}, Kaijser et. al. presented Hardy-Knopp type inequality for classical integral, where $f$ is a positive function and $\phi$ is a convex function on $(0, \infty)$.
In the following, we prove Hardy-Knopp type inequality for Sugeno integral.
%%%%%%%%%%%%%%%%%%%%%%%%%%
\begin{corollary}\label{t0-3}
(Hardy-Knopp type inequality for Sugeno integral)
Let $\phi$ be a positive and convex function on the range of the measurable function $f$. Then the inequality
\begin{eqnarray*}
\label{t3-1}
{\rm SINT}~ \phi\left(\dfrac{1}{x}- \hspace{-1.1em} \int_0^x f(t)dt\right)\dfrac{dx}{x}\le e~{\rm SINT}~ \phi(f(x))\dfrac{dx}{x},
\end{eqnarray*}
holds.%, where $\phi^{-1}$ denotes the inverse of $\phi$.
\end{corollary}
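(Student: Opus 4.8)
The plan is to obtain this corollary as an immediate consequence of the general inequality \eqref{t3-0} recorded in the preceding remark, which we may regard as already established. Inequality \eqref{t3-0} reads
\begin{eqnarray*}
{\rm SINT}~\phi\left(\dfrac{1}{x}- \hspace{-1.1em}\int_0^x \phi^{-1}f(t)dt\right)\dfrac{dx}{x}\le e~{\rm SINT}~f(x)\dfrac{dx}{x},
\end{eqnarray*}
and is asserted there to hold for a measurable function and an invertible $\phi$. The guiding idea is that the outer occurrence of $\phi$ and the inner occurrence of $\phi^{-1}$ cancel one another once the inequality is applied not to $f$ itself but to the composite $\phi\circ f$; what remains is exactly the Hardy-Knopp form in which $\phi$ sits outside the averaging operator and $\phi(f)$ appears on the right.

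Concretely, first I would set $g:=\phi\circ f$. Since $\phi$ is positive, $g$ takes values in $[0,\infty)$ and is $\mu$-measurable, so $g\in\mathfrak{F}^\sigma(X)$ and every Sugeno integral involving $g$ is well defined. Applying \eqref{t3-0} with $g$ in place of $f$ yields
\begin{eqnarray*}
{\rm SINT}~\phi\left(\dfrac{1}{x}- \hspace{-1.1em}\int_0^x \phi^{-1}(g(t))dt\right)\dfrac{dx}{x}\le e~{\rm SINT}~g(x)\dfrac{dx}{x}.
\end{eqnarray*}
Next I would simplify the inner integrand through $\phi^{-1}(g(t))=\phi^{-1}(\phi(f(t)))=f(t)$, so that the left-hand side collapses to exactly ${\rm SINT}~\phi\left(\dfrac{1}{x}- \hspace{-1.1em}\int_0^x f(t)dt\right)\dfrac{dx}{x}$, while the right-hand side is $e~{\rm SINT}~\phi(f(x))\dfrac{dx}{x}$. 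This is precisely the claimed inequality, so nothing further is needed.

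The step I expect to require the most care is the identity $\phi^{-1}(\phi(f(t)))=f(t)$, because it silently presupposes that $\phi$ is invertible on the range of $f$. Positivity and convexity alone do not force injectivity, as $t\mapsto t^2$ is convex yet not one-to-one on all of $\mathbb{R}$; what is genuinely needed is strict monotonicity of $\phi$ on the range of $f$, which is the standing assumption already flagged in the remark (``$F$ must have an inverse''). A second, milder point is to verify that $g=\phi\circ f$ is $\mu$-measurable and $[0,\infty)$-valued, so that it indeed belongs to $\mathfrak{F}^\sigma(X)$ and the substitution into \eqref{t3-0} is legitimate; this follows from measurability of $f$ together with Borel measurability of the convex (hence continuous on the interior of its domain) function $\phi$.
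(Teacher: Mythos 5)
Your proposal is correct and follows exactly the same route as the paper: substitute $\phi\circ f$ for $f$ in inequality \eqref{t3-0} and cancel $\phi^{-1}(\phi(f(t)))=f(t)$, which is word-for-word the paper's argument. Your added caveats (strict monotonicity of $\phi$ so that the cancellation is legitimate, and measurability of $\phi\circ f$) are sensible refinements of the same proof rather than a different approach.
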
\begin{proof}
By replacing $f(x)$ with $\phi(f(x))$ in \eqref{t3-0}, we obtain that
$${\rm SINT}~ \phi\left(\dfrac{1}{x}- \hspace{-1.1em} \int_0^x \phi^{-1} (\phi(f(t)))dt\right)\dfrac{dx}{x}\le e~{\rm SINT}~ \phi(f(x))\dfrac{dx}{x}.$$
Therefore
$${\rm SINT}~ \phi\left(\dfrac{1}{x}- \hspace{-1.1em} \int_0^x f(t)dt\right)\dfrac{dx}{x}\le e~{\rm SINT}~ \phi(f(x))\dfrac{dx}{x}.$$
\end{proof}

Observe that Theorem \ref{t0-2} can be seen as a particular case of Corollary \ref{t0-3}. Indeed, it is enough to set $\phi(x) = \exp(x)$ and $g(t) = \ln f(x)$, and hence $\phi(g(x)) = \exp(\ln f(x)) = f(x)$.

%%%%%%%%%%%%%%%%%%%%%%%%%%%%%
%\begin{example}
%Let $f(x)=k\in\mathbb{N}$ and $\phi(x)=x$. A straightforward calculus shows that
%\begin{eqnarray*}
 %- \hspace{-1.1em} \int_0^\infty  \phi\left(\dfrac{1}{x}- \hspace{-1.1em} \int_0^x f(t)dt\right)\dfrac{dx}{x}&=& - \hspace{-1.1em} \int_0^\infty  \phi\left(\dfrac{1}{x}- \hspace{-1.1em} \int_0^x k dt\right)\dfrac{dx}{x}\\&=&
%- \hspace{-1.1em} \int_0^\infty  \phi\left(\dfrac{1}{x} x\right)\dfrac{dx}{x}\\&=&
%- \hspace{-1.1em} \int_0^\infty \phi(1)\dfrac{dx}{x}\\&=&1,
%\end{eqnarray*}
%and
%\begin{eqnarray*}
%e- \hspace{-1.1em} \int_0^\infty  \phi(f(x))\dfrac{dx}{x}&=&
%e- \hspace{-1.1em} \int_0^\infty  \phi(k) \dfrac{dx}{x}\\&=&
%e\sqrt{k}.
%\end{eqnarray*}
%Finally we have
%$$1\le e\sqrt{k}.$$
%\end{example}
%%%%%%%%%%%%%%%%%%%%%%%%%
%%%%%%%%%%%%%%%%%%%%%%%%%%%
%%%%%%%%%%55
\section{Conclusion}
In this paper, we prove the P\'{o}lya-Knopp (in two cases) and Hardy-Knopp type inequalities for Sugeno integral as follows:
\begin{eqnarray*}
&&
{\rm SINT}~ \exp\left(\dfrac{1}{x}\int_0^x \ln f(t)dt\right)dx \le e~{\rm SINT}~ f(x)dx,\qquad \text{(first case)}\\&&
{\rm SINT}~ \exp\left(\dfrac{1}{x}- \hspace{-1.1em} \int_0^x \ln f(t)dt\right)dx \le e~{\rm SINT}~ f(x)dx,\qquad \text{(second case)}\\&&
{\rm SINT}~ \phi\left(\dfrac{1}{x}- \hspace{-1.1em} \int_0^x f(t)dt\right)\dfrac{dx}{x}\le e~{\rm SINT}~ \phi(f(x))\dfrac{dx}{x},
\end{eqnarray*}
where $f:[0, \infty)\to [0, \infty)$ (increasing measurable function in the first case and measurable function in the second case) and $\phi$ is convex function. In the future works,  we will discuss about these inequalities for pseudo and Choquet integrals.

%\section*{Acknowledgment}
%The authors are grateful to the referees for their valuable comments and suggestions, which have greatly improved the paper.

\noindent

\section*{Compliance with Ethical Standards}
This article has not been funded by anyone. None of the authors received research assistance in this article. This article has not provided any studies on human participation by any of the authors.

\date{\scriptsize $^{a}$
%Department of Mathematics, University of Maragheh, P. O. Box 55136-553, Maragheh, Iran.\\
E-mail: bdaraby@maragheh.ac.ir,}
\date{\scriptsize $^{b}$
%Department of Mathematics, University of Maragheh, P. O. Box 55136-553, Maragheh, Iran.\\
E-mail: f.rostampour@stu.maragheh.ac.ir,}
\date{\scriptsize $^{c}$
%Department of Mathematics, University of Maragheh, P. O. Box 55136-553, Maragheh, Iran.\\
E-mail: alirezakhodadadi@maragheh.ac.ir,\\
\date{\scriptsize $^{d}$
E-mail: rahimi@maragheh.ac.ir,
\date{\scriptsize $^{e}$
E-mail:
mesiar@math.sk
}

\end{document}